\theoremstyle{plain}
\newtheorem{thm}{Theorem}[section]
\newtheorem{lem}[thm]{Lemma}
\newtheorem{prop}[thm]{Proposition}
\newtheorem*{cor}{Corollary}
\theoremstyle{definition}
\newtheorem{defn}{Definition}[section]
\theoremstyle{remark}
\newtheorem{rem}{Remark}
\begin{document}

\title{On a solution of a fractional hyper-Bessel \\ [3pt]
       differential equation by means of a \\ [3pt]
       multi-index special function\vspace*{-3pt}}
\author{\normalsize Riccardo Droghei}
\date{}
 \maketitle
\footnote{Fract Calc Appl Anal 24, 1559–1570 (2021). https://doi.org/10.1515/fca-2021-0065}




 \begin{abstract}

In this paper we introduce a new multiple-parameters (multi-index) extension 
of the Wright function that arises 
from an eigenvalue problem 
for a case of hyper-Bessel operator involving Caputo fractional derivatives.
We show that by giving particular values to the parameters involved 
in this special 
 function, this leads 
 to some known special functions (as the classical Wright function, the $\alpha$-Mittag-Leffler function, the Tricomi function, etc.)
 that on their turn appear as cases of the so-called multi-index Mittag-Leffler functions. 
  As an application, we mention that this new generalization Wright function
is an isochronous solution of a nonlinear fractional partial differential equation.

 \medskip

{\it MSC 2010\/}: Primary 26A33;
                  Secondary 33E12, 34A08, 34K37, 35R11 

 \smallskip

{\it Key Words and Phrases}: special functions of fractional calculus; Wright and Mittag-Leffler
type functions; hyper-Bessel functions; fractional ordinary and partial differential
equations; fractional hyper-Bessel equation  

 \end{abstract}

 \vspace*{-16pt}

 \section{Introduction}\label{sec:1}

\setcounter{section}{1}

The special functions play an essential role in all 
fields of mathematical physics. In fact, in solving several problems,  
one is led to use various special functions, because the analytical solutions are expressed in terms of some of these functions. The applied scientists and engineers, dealing with practical application of differential equations,
recognize 
the role of the special functions as an important mathematical tool.
Recently, there has been an increasing interest to use special functions in mathematical models involving \textit{fractional order} differential equations and systems to investigate various physical, biological, biomedical, chemical, economical etc. phenomena. We can refer 
the reader to the books \cite{GM97, Kiry94, M10}, references therein, as well as surveys as \cite{Kiry10-SF} and \cite{Kiry21}.
These special functions, such as: Mittag-Leffler function, Wright functions,
their various extensions and more generally, the Fox $H$-functions,
 are called as Special Functions of Fractional Calculus (Sf of FC).

  \newpage 

Specially, the Wright function is one of the SF of FC which plays a prominent role in the solution of linear partial fractional differential equation. In particular, 
the Wright function appeared in articles related to partial differential equation of fractional order: boundary-value problems for the fractional diffusion-wave equation, e.g. \cite{GLM00}.
Initially, 
it was introduced and investigated in a series of number theory notes from 1933 and on (\cite{Wright1933,Wright1935a,Wright1935b}) in the framework of the theory of partitions.
Recently, 
several extensions 
of the Wright function have been proposed: the M-Wright function involved in 
a relevant class of self-similar stochastic processes that are 
generally referred 
 as time-fractional diffusion processes \cite{MMP10}; a four parameters extension of Wright function $W_{\alpha,\beta}^{\gamma,\delta}(z)$ introduced and studied in \cite{ES15}, etc.

  \smallskip

  In this paper we provide a solution of 
  a fractional differential equation involving hyper-Bessel type 
   operator. 
  We note that 
  the solution found, 
  in specific cases can be reduced to some 
  known SF of FC (as the Laguerre-exponential function, $n$-Mittag-Leffler function, classical Wright functions and Tricomi function, 
  hyper-Bessel functions, etc.) that on their turn are particular cases of multi-index Mittag-Leffler functions..

\smallskip

The paper is organized as follows. 
In Section \ref{sec:2}, 
we provide some preliminary information on the operators of fractional calculus and the hyper-Bessel type differential operators, and on some classes of the SF of FC, as multi-index Mittag-Leffler functions.
Then, in Section \ref{sec:3} 
introduce a fractional differential equation involving a fractional hyper-Bessel type 
operator 
We find a solution 
of this equation, as what we call as 
the \textit{multiple-parameters generalized Wright} functions $\mathcal{W}^{\left(\bar{\alpha},\bar{\nu}\right)}(z)$.
We prove that the powers series defining this special function  
represents an entire in the complex plane.
Then,  in 
Section \ref{sec:4}, 
we illustrate some cases for 
particular values 
of the parameters involved  in the function $\mathcal{W}^{\left(\bar{\alpha},\bar{\nu}\right)}(z)$,
thus retrieving 
some well-known SF of FC.
Finally, we mention about an application of the introduced special function as 
a isochronous solution of a partial differential equation involving Caputo fractional derivative.

\vspace*{-4pt}
\section{Preliminaries} 
\label{sec:2}

\setcounter{section}{2}

\vspace*{3pt} 

\subsection{Preliminaries on fractional calculus operators }\label{subsec:2.1}

In order to make the  paper self-contained, we briefly recall some basic definitions of fractional calculus operators.

Let $\gamma\in \mathbb{R}^{+}$. The Riemann-Liouville fractional integral is defined by
\vskip -8pt
\begin{equation} 
I^{\gamma}_x f(t) =
\frac{1}{\Gamma(\gamma)}\int\limits_0^{x}(x-x')^{\gamma-1}f(x') dx'. 
\label{riemann-l}
\end{equation}
 Note that, by definition, $I^0_x f(t)= f(t)$.  
Moreover it satisfies the semigroup property, i.e. $I_x^{\alpha} I_x^{\beta} f(t)= I_x^{\alpha+\beta}f(t)$. 

There are different definitions of fractional derivative (see e.g. \cite{Pod99}, \cite{KST06}).
In this paper we use the fractional derivatives in the sense of Caputo: 
\vskip -13pt 
\begin{equation}
D_x^{\gamma}f(x) \!=\!   I_x^{m-\gamma} D_x^m f(x) \!=\!
\frac{1}{\Gamma(m\!-\!\gamma)}\int\limits_0^{x}\! (x-x')^{m-\gamma-1}\frac{d^m}{d(x')^m}f
(x') \, \mathrm dx', \;\gamma \ne m.
\end{equation}


\subsection
{Differential operators of hyper-Bessel type and related special functions} \label{subsec:2.2}

In \cite{DR2003}, Dattoli, He and Ricci considered the so-called 
\textit{Laguerre derivative operator} 
\vskip -14pt 
\begin{equation}\label{HBR}
D_{nL}=\underbrace{\frac{d}{dx}x\dots\frac{d}{dx}x\frac{d}{dx}x\frac{d}{dx}}_{n+1\,\,\,derivatives}.
\end{equation}
Its fractional order version was 
analyzed  by Garra and Polito in \cite{GarraPolito}
as a \textit{hyper-Bessel type} operator of the form
\begin{equation}\label{HBGP}
\underbrace{\frac{d^\nu}{dx^\nu}x^\nu\dots\frac{d^\nu}{dx^\nu}x^\nu\frac{d^\nu}
{dx^\nu}x^\nu\frac{d^\nu}{dx^\nu}}_{n+1\,\,\,derivatives},
\end{equation}
\vskip -2pt \noindent 
with $x\geq0,\,\,\nu>0$ and $\dfrac{d^\nu}{dx^\nu}$ representing the Caputo fractional derivative, \cite{KST06, Pod99}.


It is to mention that the operators $D_{L} = {\dfrac{d}{dx} x \dfrac{d}{dx}}$ ($n=1$) and iterated $D_{nL}$ as in \eqref{HBR} have been treated with regard to operational calculi for them yet by Ditkin and Prudnikov, see for example in \cite{Ditkin-Prudnikov} (reflecting their studies since 1962 published in Russian, like \cite{Dit-Prud62}). And \eqref{HBR} is only a special case of the so-called \emph{hyper-Bessel differential operators} of higher integer order $m \geq 1$, introduced by Dimovski \cite{Dimovski1966} and studied by him in a series of papers since 1966:
\vskip -13pt
\begin{equation} \label{hB}
B = x^{\alpha_0} \frac{d}{dx} x^{\alpha_1} \frac{d}{dx} x^{\alpha_2} \dots \frac{d}{dx} x^{\alpha_{m-1}} \frac{d}{dx}
x^{\alpha_m}, \ \ x>0,
\end{equation}
with $\beta = m- (\alpha_0+\alpha_1 \dots + \alpha_m) > 0$. These appear as evident far generalizations of the Bessel differential operator $B_{\nu}$ of order $m=2$, when 
$\alpha_0=\nu^{'}\!-\!1,\, \alpha_1=-2\nu^{'}\!+\!1,\, \alpha_2=\nu^{'}$; 
\, $\nu^{'}:=\pm \nu$. Dimovski developed operational calculi for the corresponding linear right inverse operator $L$ of \eqref{hB}, the hyper-Bessel integral operator such that $BL=\mbox{Id}$, of both algebraic Mikusinski type as well as based on a suitable Laplace type integral transform, called Obrechkoff transform. For more details on the theory of the hyper-Bessel operators and its applications, see Kiryakova \cite[Ch.3]{Kiry94} and references therein.

\newpage 

In the book \cite[Ch.5]{Kiry94} and next papers, Kiryakova considered also \emph{fractional multi-order analogues of the hyper-Bessel operators} \eqref{hB}, say of the form (as (5.4.8) there),
\vskip -11pt
\begin{equation}\label{frhB}
\mathcal{D} = x^{\alpha_0} \left(\frac{d}{dx}\right)^{\delta_1} x^{\alpha_1} \left(\frac{d}{dx}\right)^{\delta_2} x^{\alpha_2} \dots \left(\frac{d}{dx}\right)^{\delta_{m-1}} x^{\alpha_{m-1}} \left(\frac{d}{dx}\right)^{\delta_m}
x^{\alpha_m},
\end{equation}
identified as cases of the generalized fractional derivatives of multi-order $\delta = (\delta_1,\dots, \delta_m)$.
Details can be found in \cite{Kiry94} and next, e.g. in \cite{KirFCAA2014}.

\subsection{Special functions of fractional calculus related to hyper-Bessel type operators} 

First of all, let us remind the \emph{classical Wright function}
\vskip -8pt
\begin{equation}\label{Wright}
W_{\beta, \nu} (x) = \sum\limits_{k=0}^{\infty} {\frac {x^k} {k!\,\Gamma (\beta k+\nu)}},
\end{equation}
and the 2 parameter \emph{Mittag-Leffler function}
\vskip -8pt
\begin{equation}\label{ML}
E_{\alpha, \beta} (x) = \sum\limits_{k=0}^{\infty} {\frac {x^k} {\Gamma (\alpha k+\beta)}}, \ \ \alpha >0.
\end{equation}

Then, the \emph{multi-index  Mittag-Leffler functions} were introduced and studied by Kiryakova
\cite{Kir2000}, \cite{Kiry10-mML}, \cite{Kiry21}, etc., by means of 2 sets of $m$ indices
$(\alpha_1 >0,..., \alpha_m >0)$ and $(\beta_1,...,\beta_m)$  instead of $\alpha, \beta$ in \eqref{ML}, as
\vskip -10pt
\begin{equation}\label{mML}
E_{(\alpha_i),(\beta_i)}^{(m)} (x) = \sum\limits_{k=0}^{\infty} {\frac {x^k} {\Gamma (\alpha_1 k+\beta_1) \dots  \Gamma (\alpha_m k+\beta_m)}},
\end{equation}
shown to incorporate a very large class of SF of FC, see above mentioned works.

Another kind of Mittag-Leffler type functions with more indices were introduced \emph{by Kilbas and Saigo} \cite{Kilbas-Saigo}, as solutions of a kind of Abel-Volterra integral equations:
\vskip -13pt
\begin{equation} \label{KilbasSaigo}
E_{\alpha,\mu,l} (x)=\sum\limits_{k=0}^{\infty} c_k x^k,  \ \ \ \mbox{with} \ \
c_k=\prod_{j=0}^{k-1}{\frac {\Gamma[\alpha(j\mu+l)+1]} {\Gamma(\alpha(j \mu+l+1)+1]}}.
\end{equation}

For the \emph{hyper-Bessel differential equations} with operator \eqref{hB} of arbitrary integer order $m \geq 1$,
\vskip -12pt
$$
B\, y(x) = \lambda\, y(x), \ \,  B\, y(x) = f(x), \ \, B\, y(x) = \lambda\, y(x) +f(x), \ \ \lambda \neq 0, \ \
f \not\equiv 0,
$$
the explicit solutions have been found by Kiryakova et al. \cite{Kiry94},\cite{Kir-McBride}, \cite{Kir-Saqabi}, etc.
in all these 3 cases. Specially, we need to remind the so-called \emph{hyper-Bessel functions of Delerue} \cite{Delerue}, 
$$ 
 J^{(m)}_{\nu_1,...,\nu_m} (x)
= {\frac {(x/m\!+\!1)^{\nu_1+...+\nu_m}} {\Gamma(\nu_1+1) ... \Gamma(\nu_m+1)}}\ j^{(m)}_{\nu_1,...,\nu_m} (x), \ \, \mbox{where}
$$ \vspace*{-7pt}
\begin{equation} \label{hBf}
 j^{(m)}_{\nu_1,...,\nu_m} (x)
= {}_0 F_m \left((\nu_k+1)_1^m; - (x/m+1)^{m+1} \right)
\end{equation}
\vspace*{-7pt}
$$
= \sum\limits_{k=0}^{\infty} {\frac {\Gamma(\nu_1+1)...\Gamma(\nu_m+1)} {k!\ \Gamma(\nu_1+k+1)...\Gamma(\nu_m+k+1)}}\,
\left( (- (x/m+1)^{m\!+\!1})^k \right).
$$
For $m=1$, we have naturally the \emph{Bessel function} 
\vskip -12pt 
\begin{equation}\label{Bessel}
J_{\nu} (x) = {\frac {(x/2)^{\nu}} {\Gamma(\nu+1)}}\, j_{\nu} (x) =
{\frac {(x/2)^{\nu}} {\Gamma(\nu+1)}} \sum_{k=0}^{\infty} {\frac {(-x^2/4)^k} {k!\, \Gamma(\nu+k+1)}}.
\end{equation}
 The related  ``normalized" hyper-Bessel functions $j^{(m)}_{\nu_1,...,\nu_m} (x)$ are shown to be eigenfunctions of the operator $B$ and the fundamental system of solutions of $By(x) = \lambda y(x)$  consists of $m$ functions of the form \eqref{hBf}. In particular (Kiryakova \cite[Ch.3]{Kiry94}), the IVP
 \vskip-11pt
 $$
 B\, y(x) = - y(x), \  \, y(0)=1, \, y^{'}(0)=y^{''}(0)=...=y^{(m-1)} (0)=0
 $$
in the case $\alpha_m=0$ has its unique solution $j^{(m-1)}_{\nu_1,...,\nu_{m-1}} (x)$.
 While, in works as \cite{Kir2000}, \cite{Ali-Kir} and next ones,  it is shown that the multi-index Mittag-Leffler function \eqref{mML} serves as a solution of a fractional hyper-Bessel differential equation of the form
 $\mathcal{D}\, y(x) = \lambda\, y(x)$ with a generalized derivative of fractional multi-order $(\delta_1,...,\delta_m)$ of the form \eqref{frhB}.

\section{Main results: multi-parameter generalized Wright function}
\label{sec:3} 

\setcounter{section}{3} 

In this paper we consider 
a  \textit{fractional hyper-Bessel operator} defined as
\begin{equation}\label{HLO}
D^{\left(\bar{\alpha},\bar{\nu}\right)}_{nL}=\frac{d^{\alpha_{n+1}}}{dx^{\alpha_{n+1}}}x^{\nu_n}\frac{d^{\alpha_{n}}}{dx^{\alpha_{n}}}x^{\nu_{n-1}}\frac{d^{\alpha_{n-1}}}{dx^{\alpha_{n-1}}}\cdots x^{\nu_1}\frac{d^{\alpha_{1}}}{dx^{\alpha_{1}}},
\end{equation}
 where $\bar\alpha=\left(\alpha_1,...,\alpha_{n+1}\right)$;  $\bar\nu=\left(\nu_1,...,\nu_{n}\right)$;  
 $\dfrac{d^{\alpha_{j}}}{dx^{\alpha_j}},\,\,j=1,...,n+1$ are Caputo fractional derivatives and $\alpha_j>0,\,\,j=1,..., n+1$ and $\nu_j>0,\,\,j=1,...,n$. %
 Note that this is alternative form (with our somewhat different notations!) of the operator \eqref{frhB}, and that
  (\ref{HBR}), (\ref{HBGP}), (\ref{hB}) are its particular cases.

\begin{defn}[\textbf{\textit{m-p generalized Wright function}}]
A multiple parameters generalized version of the Wright function $\mathcal{W}^{\left(\bar{\alpha},\bar{\nu}\right)}(z)$ is defined by the series representation as a function of the complex variable $z$ and parameters $\alpha_j,\,\,j=1,..., n+1$ and $\nu_j,\,\,j=1,...,n,$
\begin{eqnarray}\label{Wcomp} 
 &&\mathcal{W}^{\left(\bar{\alpha},\bar{\nu}\right)}(z)=\sum_{k=0}^\infty\prod_{i=1}^k\prod_{j=1}^n\frac{\Gamma(\alpha_{n+1} i+a_j)}{\Gamma(\alpha_{n+1} i+b_j)}\cdot\frac{z^k}{\Gamma(\alpha_{n+1} k+b_{n+1})},
\end{eqnarray}
\vskip -3pt \noindent 
where
\vskip -16pt
\begin{equation}
a_j=1+\sum_{m=1}^j\left(\nu_{m-1}-\alpha_{m}\right),\,\,\,\,\,\,\,\,
b_j=1+\sum_{m=1}^j\left(\nu_{m-1}-\alpha_{m-1}\right).\nonumber
\end{equation}

First 
we show that 
 $W^{\left(\bar{\alpha},\bar{\nu}\right)}(z)$ is an entire function for $\alpha_j>0,\,j=1,...,n+1;\,\,\nu_j\in \mathbb{C},\,j=1,...,n$ and $\alpha_0=\nu_0=0$.
\end{defn} 


\begin{lem} 
The multiple parameters generalized Wright function 
is an entire function of the complex variable $z$. 
\end{lem}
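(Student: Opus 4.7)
The plan is to establish the entire character of $\mathcal{W}^{(\bar{\alpha},\bar{\nu})}$ by applying the ratio test to its defining power series and showing that the radius of convergence is infinite. Writing the coefficient as
$$c_k = \prod_{i=1}^k \prod_{j=1}^n \frac{\Gamma(\alpha_{n+1} i + a_j)}{\Gamma(\alpha_{n+1} i + b_j)} \cdot \frac{1}{\Gamma(\alpha_{n+1} k + b_{n+1})},$$
the ratio $c_{k+1}/c_k$ collapses (most of the double product cancels) to
$$\frac{c_{k+1}}{c_k} = \prod_{j=1}^n \frac{\Gamma(\alpha_{n+1}(k+1) + a_j)}{\Gamma(\alpha_{n+1}(k+1) + b_j)} \cdot \frac{\Gamma(\alpha_{n+1} k + b_{n+1})}{\Gamma(\alpha_{n+1}(k+1) + b_{n+1})}.$$

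The first key step is a telescoping computation of $a_j - b_j$. From the definitions one gets
$$b_j - a_j = \sum_{m=1}^j\bigl[(\nu_{m-1}-\alpha_{m-1}) - (\nu_{m-1}-\alpha_m)\bigr] = \sum_{m=1}^j (\alpha_m - \alpha_{m-1}) = \alpha_j - \alpha_0 = \alpha_j,$$
since $\alpha_0 = 0$. Hence $a_j - b_j = -\alpha_j$ for every $j = 1,\dots,n$.

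Next I would invoke the standard Stirling asymptotic
$$\frac{\Gamma(z+a)}{\Gamma(z+b)} = z^{a-b}\bigl(1 + O(1/z)\bigr), \qquad z \to \infty,$$
valid for fixed complex $a,b$. Applying it termwise, each factor in the product contributes $(\alpha_{n+1}(k+1))^{-\alpha_j}(1+o(1))$, while the final quotient of Gamma functions contributes $(\alpha_{n+1} k)^{-\alpha_{n+1}}(1+o(1))$. Multiplying,
$$\left|\frac{c_{k+1}}{c_k}\right| = \bigl(\alpha_{n+1} k\bigr)^{-\,\Sigma}\bigl(1+o(1)\bigr), \qquad \Sigma := \sum_{j=1}^{n+1}\alpha_j > 0.$$
Since $\Sigma > 0$ (all $\alpha_j$ are strictly positive by assumption), $|c_{k+1}/c_k| \to 0$ as $k \to \infty$, so the series has infinite radius of convergence and defines an entire function.

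The main (and only real) obstacle is making the Stirling-type estimate for a product of $n$ gamma-ratios rigorous and uniform: one must check that the error terms $1+O(1/k)$ from each of the $n+1$ factors can be combined without upsetting the leading behaviour. This is entirely routine because $n$ is fixed and independent of $k$, so the compound error is still $1+o(1)$; the only thing to watch is that the arguments $\alpha_{n+1}(k+1)+a_j$ and $\alpha_{n+1}(k+1)+b_j$ avoid the poles of $\Gamma$ for all sufficiently large $k$, which is automatic since $\alpha_{n+1}>0$ pushes the arguments off to $+\infty$ along the real axis, so the complex parameters $\nu_j$ create no difficulty.
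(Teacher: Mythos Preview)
Your argument is correct and follows essentially the same route as the paper: ratio test combined with the Wendel/Stirling asymptotic $\Gamma(z+a)/\Gamma(z+b)\sim z^{a-b}$, leading to $|c_{k+1}/c_k|\sim (\alpha_{n+1}k)^{-\sum_j \alpha_j}\to 0$. The paper's proof is terser and does not display the telescoping calculation $b_j-a_j=\alpha_j$ that you spell out, so your version is in fact more complete on that point.
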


\begin{proof}
Using the Wendel asymptotic formula \cite{Wendel} (see also \cite{Erdely1} formula 1.18 (4)), we have
\vskip -14pt
\begin{equation}
\frac{\Gamma(z+a)}{\Gamma(z+b)}=z^{a-b}\left[1+O(z^{-1})\right],\  \,\, |z|\rightarrow\infty,\ \,\,|arg(z)|<\pi.
\end{equation}

One can 
 compute the convergence radius of the series for 
 the multiple parameters generalized Wright function using the D'Alembert Criteria (Ratio Test):
\vskip -14pt
$$ 
\lim_{k\rightarrow+\infty}|\frac{a_{k+1}}{a_k}|=\lim_{k\rightarrow+\infty}|
\frac{\Gamma(\alpha_{n+1}k+b_{n+1})}{\Gamma(\alpha_{n+1}(k+1)+b_{n+1})}
\prod_{j=1}^n\frac{\Gamma(\alpha_{n+1}(k+1)+a_j)}{\Gamma(\alpha_{n+1}(k+1)+b_j)}|
$$ 
\vspace*{-10pt} 
\begin{equation}
= \lim_{k\rightarrow+\infty}|(\alpha_{n+1}k)^{-\alpha_{n+1}}\prod_{j=1}^n(\alpha_{n+1}k)^{-\alpha_{j}}|
 = \lim_{k\rightarrow+\infty}|\prod_{j=1}^{n+1}(\alpha_{n+1}k)^{-\alpha_{j}}|=0,
\end{equation} 
\vskip -2pt \noindent 
assuming 
$\alpha_j>0,\, j=0,...,n+1$.
\end{proof}

\begin{thm}\label{mainth.} 
The multiple parameters generalized Wright function $\mathcal{W}^{\left(\bar{\alpha},\bar{\nu}\right)}(\lambda x^{\alpha_{n+1}})$ with $\lambda\in\mathbb{R},\,\,x\geq0,\,\,\alpha_j>0,\,\,j=1,...,n+1$ and $\nu_j>0,\,j=1,...,n$,
satisfies the following fractional differential equation involving fractional
hyper-Bessel type 
operator (\ref{HLO})
\vskip -8pt 
\begin{equation}\label{eigen}
D^{\left(\bar{\alpha},\bar{\nu}\right)}_{nL} W^{\left(\bar{\alpha},\bar{\nu}\right)}(\lambda x^{\alpha_{n+1}})=\lambda x^{\sum_{s=1}^{n} (\nu_s-\alpha_s)}W^{\left(\bar{\alpha},\bar{\nu}\right)}(\lambda x^{\alpha_{n+1}}).
\end{equation}
\end{thm}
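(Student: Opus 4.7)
The plan is to act with the operator $D^{(\bar\alpha,\bar\nu)}_{nL}$ term by term on the power series defining $\mathcal{W}^{(\bar\alpha,\bar\nu)}(\lambda x^{\alpha_{n+1}})$ (interchange of sum and operator is legitimate because the series is entire by the preceding lemma), and then to show that the resulting series is exactly the right-hand side of \eqref{eigen}. The only tool I actually need is the Caputo power rule
$$\frac{d^{\alpha}}{dx^{\alpha}} x^{\beta} = \frac{\Gamma(\beta+1)}{\Gamma(\beta-\alpha+1)}\, x^{\beta-\alpha},$$
valid for the exponents that appear below, together with the fact that a Caputo derivative annihilates constants (which will take care of the $k=0$ summand).

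For the $k$-th term I would apply the operator from the inside out, tracking the exponent and the accumulated Gamma ratios. Setting $p_{0}=\alpha_{n+1}k$ and, by convention, $\nu_{0}=\alpha_{0}=0$, the successive actions of $\frac{d^{\alpha_{j}}}{dx^{\alpha_{j}}}$ and multiplication by $x^{\nu_{j}}$ push the exponent through the recursion $p_{j}=p_{j-1}+\nu_{j-1}-\alpha_{j}$, and introduce the factor $\Gamma(p_{j-1}+\nu_{j-1}+1)/\Gamma(p_{j}+1)$. A direct check against the definitions of $a_{j},b_{j}$ shows $p_{j}+1=\alpha_{n+1}k+a_{j}$ and $p_{j-1}+\nu_{j-1}+1=\alpha_{n+1}k+b_{j}$, so the monomial $x^{\alpha_{n+1}k}$ is sent to
$$\Bigl(\prod_{j=1}^{n+1}\frac{\Gamma(\alpha_{n+1}k+b_{j})}{\Gamma(\alpha_{n+1}k+a_{j})}\Bigr)\, x^{\alpha_{n+1}k+a_{n+1}-1}.$$
Inserting this into the series, the ratio with $j=n+1$ supplies $\Gamma(\alpha_{n+1}k+b_{n+1})$, which cancels against the denominator already present in the $k$-th coefficient, and the remaining $j=1,\dots,n$ factors invert (and hence cancel) exactly the $i=k$ block of the running double product $\prod_{i=1}^{k}\prod_{j=1}^{n}\Gamma(\alpha_{n+1}i+a_{j})/\Gamma(\alpha_{n+1}i+b_{j})$.

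What remains is the coefficient
$$\prod_{i=1}^{k-1}\prod_{j=1}^{n}\frac{\Gamma(\alpha_{n+1}i+a_{j})}{\Gamma(\alpha_{n+1}i+b_{j})}\cdot\frac{\lambda^{k}}{\Gamma(\alpha_{n+1}k+a_{n+1})}\, x^{\alpha_{n+1}(k-1)+\sum_{s=1}^{n}(\nu_{s}-\alpha_{s})}.$$
The crucial identity needed is $b_{n+1}=a_{n+1}+\alpha_{n+1}$, which is a telescoping consequence of the definitions:
$$b_{n+1}-a_{n+1}=\sum_{m=1}^{n+1}(\alpha_{m}-\alpha_{m-1})=\alpha_{n+1}-\alpha_{0}=\alpha_{n+1}.$$
This turns $\Gamma(\alpha_{n+1}k+a_{n+1})$ into $\Gamma(\alpha_{n+1}(k-1)+b_{n+1})$, and after the reindexing $k\mapsto k+1$ the $k=0$ term drops out (matching the vanishing observed from the Caputo annihilation of constants) while the remaining series is precisely $\lambda\, x^{\sum_{s=1}^{n}(\nu_{s}-\alpha_{s})}\,\mathcal{W}^{(\bar\alpha,\bar\nu)}(\lambda x^{\alpha_{n+1}})$. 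The main obstacle I expect is purely notational: carefully identifying $a_{j}$ and $b_{j}$ with the exponents arising at each stage, and recognising the telescoping that yields $b_{n+1}-a_{n+1}=\alpha_{n+1}$; everything else is a clean bookkeeping exercise.
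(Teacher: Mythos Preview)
Your proposal is correct and follows essentially the same route as the paper: apply the operator term by term via the Caputo power rule, collect the resulting Gamma ratios $\prod_{j=1}^{n+1}\Gamma(\alpha_{n+1}k+b_j)/\Gamma(\alpha_{n+1}k+a_j)$, cancel against the $i=k$ block and the $\Gamma(\alpha_{n+1}k+b_{n+1})$ denominator, and reindex $k\mapsto k-1$. Your write-up is in fact more explicit than the paper's on the two points that make the computation go through---the identification $p_j+1=\alpha_{n+1}k+a_j$, $p_{j-1}+\nu_{j-1}+1=\alpha_{n+1}k+b_j$, and the telescoping identity $b_{n+1}-a_{n+1}=\alpha_{n+1}$---so nothing is missing.
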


\begin{proof}
Applying recursively $\left(n+1\right)$-times 
the involved Caputo derivatives on the function \eqref{Wcomp}, 
we find the following expression
\vskip -12pt 
\begin{eqnarray}
&&\frac{d^{\alpha_{n+1}}}{dx^{\alpha_{n+1}}}x^{\nu_n}\frac{d^{\alpha_{n}}}{dx^{\alpha_{n}}}x^{\nu_{n-1}}\frac{d^{\alpha_{n-1}}}{dx^{\alpha_{n-1}}}\cdots x^{\nu_1}\frac{d^{\alpha_{1}}}{dx^{\alpha_{1}}}W^{\left(\bar{\alpha},\bar{\nu}\right)}( \lambda x^{\alpha_{n+1}})\nonumber\\
&=&\frac{d^{\alpha_{n+1}}}{dx^{\alpha_{n+1}}} \sum_{k=1}^\infty
\prod_{i=1}^k  \prod_{j=1}^n\frac{\Gamma(\alpha_{n+1} i+a_j)}{\Gamma(\alpha_{n+1} i+b_j)\Gamma(\alpha_{n+1} k+b_{n+1})}
\nonumber 
\end{eqnarray} \begin{eqnarray} 
&& \times\,
\prod_{m=1}^n\frac{\Gamma(\alpha_{n+1} k+b_{m})}{\Gamma(\alpha_{n+1} k+a_{m})} \lambda^k x^{\alpha_{n+1} k-\alpha_1\cdots-\alpha_n+\nu_1\cdots+\nu_n} \nonumber\\
&=&\sum_{k=1}^\infty
 \prod_{i=1}^k
 \prod_{j=1}^n\frac{\Gamma(\alpha_{n+1} i+a_j)}{\Gamma(\alpha_{n+1} i+b_j)\Gamma(\alpha_{n+1} k+b_{n+1})}
 \nonumber \\ 
 && \times\,
 \prod_{m=1}^n\frac{\Gamma(\alpha_{n+1} k+b_{m})}{\Gamma(\alpha_{n+1} k+a_{m})}\frac{\Gamma(\alpha_{n+1} k+b_{n+1})}{\Gamma(\alpha_{n+1} k+a_{n+1})}\lambda^k x^{\alpha_{n+1} k+\sum_{s=1}^{n} (\nu_s-\alpha_s)}\nonumber\\
&=&\lambda x^{\sum_{s=1}^{n} (\nu_s-\alpha_s)}\sum_{k=1}^\infty\prod_{i=1}^{k-1}\prod_{j=1}^n\frac{\Gamma(\alpha_{n+1} i+a_j)}{\Gamma(\alpha_{n+1} i+b_j)}\frac{\lambda^{k-1}x^{\alpha_{n+1} (k-1)}}{\Gamma(\alpha_{n+1} (k-1)+b_{n+1})}\nonumber\\
&=&\lambda x^{\sum_{s=1}^{n} (\nu_s-\alpha_s)}\sum_{k=0}^\infty\prod_{i=1}^{k}\prod_{j=1}^n\frac{\Gamma(\alpha_{n+1} i+a_j)}{\Gamma(\alpha_{n+1} i+b_j)}\frac{\lambda^kx^{\alpha_{n+1} k}}{\Gamma(\alpha_{n+1} k+b_{n+1})}\nonumber\\
&=&\lambda x^{\sum_{s=1}^{n} (\nu_s-\alpha_s)}W^{\left(\bar{\alpha},\bar{\nu}\right)}( \lambda x^{\alpha_{n+1}}).
\nonumber
\end{eqnarray}
\end{proof} 

\vspace*{-16pt}

\begin{cor}\label{Cor3.1}
In the case if \ $\sum_{s=1}^{n} (\nu_s-\alpha_s)=0$, 
the multiple parameters generalized Wright function $\mathcal{W}^{\left(\bar{\alpha},\bar{\nu}\right)}(\lambda x^{\alpha_{n+1}})$ becomes eigenfunction of the fractional hyper-Bessel type 
operator (\ref{HLO}).
\end{cor}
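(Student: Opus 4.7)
The plan is to derive this corollary as an immediate consequence of Theorem \ref{mainth.}, which has just been established. The content is purely a substitution into the eigenvalue-like identity proved in that theorem, so the argument is short and no genuine obstacle is expected.

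First I would recall the conclusion of Theorem \ref{mainth.}, namely
$$D^{\left(\bar{\alpha},\bar{\nu}\right)}_{nL}\, W^{\left(\bar{\alpha},\bar{\nu}\right)}(\lambda x^{\alpha_{n+1}}) = \lambda\, x^{\sum_{s=1}^{n} (\nu_s-\alpha_s)}\, W^{\left(\bar{\alpha},\bar{\nu}\right)}(\lambda x^{\alpha_{n+1}}),$$
valid under the stated hypotheses $\lambda \in \mathbb{R}$, $x \geq 0$, $\alpha_j > 0$, $\nu_j > 0$. Then I would impose the extra assumption of the corollary, namely $\sum_{s=1}^{n}(\nu_s - \alpha_s) = 0$, which makes the prefactor collapse to $x^{0} = 1$ on the working domain $x>0$.

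Substituting this into the identity from Theorem \ref{mainth.} yields
$$D^{\left(\bar{\alpha},\bar{\nu}\right)}_{nL}\, W^{\left(\bar{\alpha},\bar{\nu}\right)}(\lambda x^{\alpha_{n+1}}) = \lambda\, W^{\left(\bar{\alpha},\bar{\nu}\right)}(\lambda x^{\alpha_{n+1}}),$$
which is precisely the statement that $\mathcal{W}^{\left(\bar{\alpha},\bar{\nu}\right)}(\lambda x^{\alpha_{n+1}})$ is an eigenfunction of the fractional hyper-Bessel operator $D^{\left(\bar{\alpha},\bar{\nu}\right)}_{nL}$ associated with the eigenvalue $\lambda$. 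Since Theorem \ref{mainth.} has already absorbed all the nontrivial manipulation of the Caputo derivatives and the gamma-ratio coefficients, nothing further is required and the corollary follows.
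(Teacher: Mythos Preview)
Your proposal is correct and matches the paper's approach: the corollary is stated in the paper without a separate proof, as it is an immediate consequence of Theorem~\ref{mainth.} obtained by setting $\sum_{s=1}^{n}(\nu_s-\alpha_s)=0$ so that the factor $x^{\sum_{s=1}^{n}(\nu_s-\alpha_s)}$ reduces to $1$.
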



\begin{rem}
As for further analysis, it would be interesting to compare then the solution from Corollary \ref{Cor3.1} with a multi-index Mittag-Leffler function of the kind of \eqref{mML}.
\end{rem}

\section{Some particular cases} \label{sec:4} 

\setcounter{section}{4} 

In this section, we assign particular values to the parameters of the function $\mathcal{W}^{\left(\bar{\alpha},\bar{\nu}\right)}(z)$  and so, 
can identify it with some special functions well known in literature.


\begin{rem}{\textbf{Laguerre-exponential function.}}\\ 
Let us 
consider the particular integer order case, when 
$\alpha_j=\nu_j=1,\,\,j=1,\cdots,n; \,\,\alpha_{n+1}=1$ and therefore, $a_j=0; b_j=1,\,\,j=1,\cdots,n+1$.

It is simply to verify that multiple parameters Wright function matches with the so-called Laguerre-exponential function
\vskip -13pt
\begin{equation}\label{Lexp}
e_{n}(x)=\sum_{k=0}^{\infty} \frac{x^k}{(k!)^{n+1}}
= \sum_{k=0}^{\infty} \frac{x^k}{\left(\Gamma(k+1)\right)^{n+1}}
= E^{(n+1)}_{(1,1,...,1), (1,1,...,1)} (x) , 
\end{equation}
\vskip -3pt \noindent
investigated in \cite{DR2003} as eigenfunction of operator \eqref{HBR},
which is practically a multi-index M-L function \eqref{mML} with $(n+1)$-parameters $(1,1,...,1)$ and $(1,1..,1)$.
\end{rem}


\begin{rem}{\textbf{$n$-Mittag-Leffler function.}}\\ 
Similarly, in case of $\alpha_i=\nu, \,i=1,...,n+1$ and $\nu_i=\nu,\, i=1,...,n$,\,
$n = 1, 2, 3,.... $, 
 the multiple parameters Wright function (\ref{Wcomp}) becomes the $n$-Mittag-Leffler function from \cite{GarraPolito}:
 \vskip -10pt 
\begin{equation}\label{nML}
E_{n;\nu,1}(x)=\sum_{k=0}^{\infty} \frac{x^k}{\left(\Gamma(\nu k+1)\right)^{n+1}},\,\,\,\,\,\,x\geq0,\,\,\,\nu>0.
\end{equation}
Moreover, the hyper-Bessel type 
 operator in (\ref{eigen}) is then 
 the hyper-Bessel-type operator \eqref{HBGP} 
 investigated in Garra and Polito \cite{GarraPolito}.
 Note that \eqref{nML} is again a case of the multi-index Mittag-Leffler function \eqref{mML}, since
 \vskip -10pt
 $$
 E_{n;\nu,1}(x)= \sum_{k=0}^{\infty} \frac{x^k}{\Gamma(\nu k+1)...\Gamma(\nu k+1)} = E^{(n+1)}_{(\nu,\nu,...,\nu),(1,1,...,1)} (x),
 $$
 and so, eigen function of \eqref{HBGP} as particular case of the fractional hyper-Bessel differential operators \eqref{frhB} and \eqref{HLO}.
\end{rem}


\begin{rem} 
Let us analyse 
{\bf the particular case of the m-p generalized Wright function} \eqref{Wcomp}  
\textbf{for $n=1$} with $\alpha_2=\beta,\,\, \alpha_1=\alpha$ and $\nu_1=\nu$:
\vskip -12pt
\begin{equation} \label{n=1} 
\mathcal{W}_{\alpha,\beta,\nu}(x^\beta)=\sum_{k=0}^\infty\prod_{i=1}^k\frac{\Gamma(\beta i+1-\alpha)}{\Gamma(\beta i+1)}\, \frac{x^{\beta k}}{\Gamma(\beta k+1-\alpha+\nu)}. 
\end{equation}  
\end{rem}

\begin{prop}\label{lag} 
Obviously, the particular case \eqref{n=1}, 
$f(x)\!=\!\mathcal{W}_{\alpha,\beta,\nu}(x^\beta)$, 
satisfies the following fractional differential equation
\vskip -9pt
\begin{equation}
\frac{d^\beta}{dx^\beta}\left(x^\nu\frac{d^\alpha}{dx^\alpha}f(x)\right)=x^{\nu-\alpha}f(x),
\end{equation}
involving two fractional derivatives in the sense of Caputo of orders $\alpha,\beta\in\left(0,1\right)$.
\end{prop}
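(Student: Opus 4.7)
The plan is to recognize that Proposition 3.1 is just the $n=1$ instance of Theorem 3.1 with the parameter relabeling $\alpha_1=\alpha$, $\alpha_2=\beta$, $\nu_1=\nu$, and the constant $\lambda=1$. So there is really nothing new to prove; one only has to check that the operator and the right-hand-side exponent in Theorem 3.1 collapse to the ones in the proposition.

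First I would instantiate the hyper-Bessel operator \eqref{HLO} at $n=1$. With the bars shortening to $\bar\alpha=(\alpha_1,\alpha_2)=(\alpha,\beta)$ and $\bar\nu=(\nu_1)=(\nu)$, the definition gives
\begin{equation*}
D^{(\bar\alpha,\bar\nu)}_{1L}\;=\;\frac{d^{\alpha_2}}{dx^{\alpha_2}}\,x^{\nu_1}\,\frac{d^{\alpha_1}}{dx^{\alpha_1}}\;=\;\frac{d^{\beta}}{dx^{\beta}}\,x^{\nu}\,\frac{d^{\alpha}}{dx^{\alpha}},
\end{equation*}
which is exactly the operator appearing on the left of the proposition. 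Next I would compute the exponent on the right-hand side of the eigen-type relation \eqref{eigen}: the sum $\sum_{s=1}^{n}(\nu_s-\alpha_s)$ reduces, for $n=1$, to the single term $\nu_1-\alpha_1=\nu-\alpha$, matching the factor $x^{\nu-\alpha}$ in the proposition.

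It then remains to check that the series in \eqref{n=1} is indeed $\mathcal{W}^{(\bar\alpha,\bar\nu)}(x^\beta)$ for the chosen parameters and $\lambda=1$. With $n=1$ the outer product $\prod_{j=1}^n$ in \eqref{Wcomp} contains only the $j=1$ factor, and with $\alpha_0=\nu_0=0$ one has
\begin{equation*}
a_1=1+(\nu_0-\alpha_1)=1-\alpha,\qquad b_1=1+(\nu_0-\alpha_0)=1,\qquad b_2=1-\alpha+\nu,
\end{equation*}
so \eqref{Wcomp} reduces precisely to the series \eqref{n=1} defining $\mathcal{W}_{\alpha,\beta,\nu}(x^\beta)$.

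With these three identifications in place the conclusion is immediate: applying Theorem 3.1 with $\lambda=1$ gives
\begin{equation*}
\frac{d^{\beta}}{dx^{\beta}}\Bigl(x^{\nu}\,\frac{d^{\alpha}}{dx^{\alpha}}f(x)\Bigr)\;=\;x^{\nu-\alpha}\,f(x),
\end{equation*}
which is the desired equation. There is essentially no obstacle, since the genuine term-by-term computation with the Caputo derivatives was already carried out in the proof of Theorem 3.1; the only minor thing to be careful about is the bookkeeping of the indices $a_j,b_j$ under the convention $\alpha_0=\nu_0=0$, so that the product over $j$ in \eqref{Wcomp} produces the single ratio $\Gamma(\beta i+1-\alpha)/\Gamma(\beta i+1)$ seen in \eqref{n=1}.
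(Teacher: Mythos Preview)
Your proposal is correct and is precisely the approach the paper intends: the proposition carries no separate proof in the paper, and the word ``Obviously'' signals that it is nothing more than the $n=1$ specialization of Theorem~\ref{mainth.} with $\alpha_1=\alpha$, $\alpha_2=\beta$, $\nu_1=\nu$, $\lambda=1$. Your verification of the operator, the exponent $\nu-\alpha$, and the coefficients $a_1=1-\alpha$, $b_1=1$, $b_2=1-\alpha+\nu$ is exactly the bookkeeping needed to see that \eqref{n=1} is the instance of \eqref{Wcomp} and that \eqref{eigen} collapses to the stated equation.
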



\begin{rem}{\textbf{Classical Wright function.}}\\  
In the case $\alpha=1$ the function  $\mathcal{W}_{1,\beta,\nu}(\beta x^\beta)$ in \eqref{n=1}
 coincides with the classical Wright function \eqref{Wright}, namely: 
 \vskip -10pt
$$\mathcal{W}_{1,\beta,\nu}(\beta x^\beta)=\sum_{k=0}^{\infty}\frac{1}{k!\Gamma(\beta k+\nu)}x^{\beta k}=W_{\beta,\nu}(x^\beta), $$
which on its turn is a multi-index ($2 \!\times\! 2)$ M-L function as
\vskip -11pt
$$
W_{\beta,\nu}(x^\beta)= \sum_{k=0}^{\infty}\frac{x^{\beta k}}{\Gamma(1.k +\nu)\Gamma(\beta k+\nu)}
= E^{(2)}_{(1, \beta),(1,\nu)} (x^{\beta}).
$$
\end{rem}


\begin{rem}{\textbf{Tricomi function.}}\\ 
For $\alpha=\beta=\nu=1$ the function  $\mathcal{W}_{1,1,1}(x)$ matches with %
the \textit{Tricomi function}:
\vskip -15pt
 $$C_{0}(x)=\sum_{k=0}^{\infty}\frac{x^k}{\left(k!\right)^2} =
 \sum_{k=0}^{\infty} \frac{x^k}{\left(\Gamma(k+1)\right)^2}= E^{(2)}_{(1,1),(1,1)} (x).
 $$
According to Dattoli, He and Ricci \cite{DHR05}, this is 
an eigenfunction of the Laguerre derivative $D_L$, i.e. of \eqref{HBR} with $n=1$. 
It is also directly related to the Bessel differential operators with parameter $0$
and to the Bessel function \eqref{Bessel} of order $0$:\,
$ J_0(x)= C_0 \left(- x^2/4 \right)$.
\end{rem}


\begin{rem}{\textbf{Application to a nonlinear fractional isochronous PDE.}} 
As a simple application of Theorem \ref{mainth.} (and in particular Proposition \ref{lag}),
in \cite{DG20} we have analysed 
the following nonlinear fractional PDE with the remarkable property to have isochronous solutions, i.e. completely periodic solutions with fixed period $T=\frac{2\pi}{\omega}$, 
\begin{equation}\label{IsoLag}
\frac{\partial u(x,t)}{\partial t}+i\omega u(x,t)=\frac{\partial^\beta}{\partial x^\beta}x^{\nu}\frac{\partial^\alpha u(x,t)}{\partial x^\alpha} +ikx^{\nu-\alpha} u(x,t). 
\end{equation}
We have shown (\cite{DG20}) that \eqref{IsoLag} has 
an explicit separable variables isochronous solution as:
\vskip -13pt 
\begin{equation}
u(x,t)=\exp(i\omega t)\cdot \mathcal{W}_{\alpha,\beta,\gamma}(-ikx^\beta).
\end{equation}
\end{rem}

\section{Conclusion} 

 Our main result of this paper is related to a special 
 function obtained by solving the fractional differential equation (\ref{eigen})
 involving a fractional hyper-Bessel-type 
 operator (\ref{HLO}). 
  To the best of our knowledge, 
  such kind of a special function was not studied by now.
But as seen, it is reduced in particular cases to some  known special functions, which on their side are
cases of the Bessel and hyper-Bessel functions and  more generally, of the multi-index Mittag-Leffer functions of the form \eqref{mML}. 
The definition of the m-p generalized Wright function \eqref{Wcomp} introduced here also resembles somehow  to the Kilbas-Saigo function \eqref{KilbasSaigo} of M-L type.
Therefore, it is interesting to study the relations between the function $\mathcal{W}^{\left(\bar{\alpha},\bar{\nu}\right)}$ and the functions \eqref{hBf}, \eqref{mML}, \eqref{KilbasSaigo},
and in general -- with other SF of FC of Bessel and Mittag-Leffler type,
refer for example to Kiryakova \cite{Kiry21}. 

\vspace*{-5pt} 

\section*{Acknowledgements}

The author is grateful to Dr. Roberto Garra for providing essential information, to the editor who help me to enter the topic in deep, and helping to expand the bibliography.

\vspace*{-7pt}



 \bigskip \smallskip

 \it

 \noindent
Liceo Scientifico Francesco Severi\\
Viale Europa,36, 03100 Frosinone (FR), ITALY\\[4pt]
  e-mail: riccardo.droghei@posta.istruzione.it \\
  e-mail: riccardo.droghei@francescoseveri.org \\ [3pt] 


\begin{thebibliography}{99} \normalsize 

\bibitem{Ali-Kir}
I. Ali, V. Kiryakova, S. Kalla,  Solutions of fractional multi-order integral and
        differential equations using a Poisson-type transform.  \emph{J. Math. Anal. and Appl.}
         {\bf  269}, No 1 (2002), 172--199; DOI: 10.1016/S0022-247X(02)00012-4.

 \bibitem{Erdely1}H. Bateman, Ed. by A. Erd\'elyi et al.,
\emph{Higher Transcendental Functions}, Vol. 1. McGraw-Hill Book Co. (1953).

\bibitem{DHR05} G. Dattoli, M.X. He, P.E. Ricci, Eigenfunctions of Laguerre-type operators and generalized evolution
problems. \emph{Mathematical and Computer Modelling} \textbf{42}, No 11-12 (2005), 1263--1268.

\bibitem{DR2003} G. Dattoli, P.E. Ricci, Laguerre-type exponentials, and the relevant-circular and-hyperbolic functions. \emph{Georgian Math. J.} \textbf{10}, No 3 (2003), 481--494.


    \bibitem{Delerue}
   P. Delerue, Sur le calcul symbolique \`{a} $n$ variables et fonctions hyper-besseliennes (II).
   \emph{Ann. Soc. Sci. Brux.} {\bf 3} (1953), 229--274.

    \bibitem{Dimovski1966}
    I. Dimovski, Operational calculus for a class of differential operators. \emph{C. R. Acad. Bulg. Sci.} {\bf 19} (1966), 1111--1114.



    \bibitem{Dit-Prud62}
    V.A. Ditkin, A.P. Prudnikov, An operational calculus for the Bessel operators (In Russian). \emph{Zhournal Vychisl. Mat. i Mat. Fiziki} {\bf 2} (1962), 997--1018.

    \bibitem{Ditkin-Prudnikov}
    V.A. Ditkin, A.P. Prudnikov, \emph{Integral Transforms and Operational Calculus}.
     Pergamon Press, Oxford (1965).

\bibitem{DG20} R. Droghei, R. Garra, Isochronous fractional PDEs. \emph{Lecture Notes of TICMI} \textbf{21} (2020), 43--51.

\bibitem{ES15}  M. El-Shahed, A. Salem, An extension of Wright function and its properties. \emph{J. of Mathematics} \textbf{2015} (2015); DOI: 10.1155/2015/950728.

 \bibitem{GarraPolito} R. Garra, F. Polito, On some operators involving Hadamard
derivatives. \emph{Integr. Transf. Spec. Funct.} \textbf{20}, No 10 (2013), 773--782; DOI: 10.1080/10652469.2012.756875.

\bibitem{GLM00} R. Gorenflo, Yu. Luchko, F. Mainardi, Wright functions as scale-invariant solutions of the diffusion-wave equation. \emph{J. of Comput. and Appl. Math.} \textbf{118}, No 1-2 (2000), 175--191.

\bibitem{GM97} R. Gorenflo, F. Mainardi, {Fractional calculus: integral and differential equations of fractional order}. In: A. Carpinteri and F. Mainardi (Eds.), \emph{Fractals and Fractional Calculus in Continuum Mechanics},
Springer Verlag, Wien (1997), 223--276; E-print: arXiv:0805.3823.

\bibitem{Kilbas-Saigo}
A.A. Kilbas, M. Saigo, On solution of integral equation of Abel-Volterra type.
\emph{Diff. and Integral Equations} {\bf 8}, No 5 (1995), 993--1011.

\bibitem{KST06} A.A. Kilbas, H.M. Srivastava, J.J. Trujillo,
\emph{Theory and Applications of Fractional Differential Equations}, \textbf{204}. Elsevier Science Ltd. (2006).


\bibitem{Kiry94} V. Kiryakova, \emph{Generalized Fractional Calculus and Applications}. Longman – J. Wiley, Harlow, N. York (1994).

    \bibitem{Kir2000}
    V. Kiryakova,
    Multiple (multiindex) Mittag-Leffler functions and relations to generalized fractional calculus.
     \emph{J. Comput. Appl. Mathematics} {\bf 118} (2000), 241--259;  DOI: 10.1016/S0377-0427(00)00292-2.

\bibitem{Kiry10-mML}
V. Kiryakova, The multi-index Mittag-Leffler functions as important class of special
  functions of fractional calculus. \emph{Computers and Math. with Appl.} {\bf 59}, No 5 (2010), 1885--1895; DOI: 10.1016/j.camwa.2009.08.025.

\bibitem{Kiry10-SF} V. Kiryakova, The special functions of fractional calculus as generalized fractional calculus operators of some basic functions. \emph{Computers and Math. with Appl.} \textbf{59}, No 3 (2010), 1128--1141;
    DOI: 10.1016/j.camwa.2009.05.014.

  \bibitem{KirFCAA2014}
    V. Kiryakova, From the hyper-Bessel operators of Dimovski to the generalized fractional calculus.
     \emph{Fract. Calc. Appl. Anal.} {\bf 17}, No 4 (2014), 977--1000; DOI: 10.2478/s13540-014-0210-4;
     \href{https://www.degruyter.com/document/doi/10.2478/s13540-014-0210-4/html}
     {https://www.degruyter.com/document/doi/10.2478/s13540-014-0210-4/html}.

\bibitem{Kiry21} V. Kiryakova,  A guide to special functions in fractional calculus. \emph{Mathematics} \textbf{9}, No 1 (2021), Art. \# 10 ; DOI: 10.3390/math9010106.

    \bibitem{Kir-Saqabi}
     V. Kiryakova, B. Al-Saqabi, Explicit solutions to hyper-Bessel integral equations
         of second kind. \emph{Computers $\&$ Math. with Appl.} {\bf 37}, No 1 (1999), 75--86;
       DOI: 10.1016/S0898-1221(98)00243-0.

    \bibitem{Kir-McBride}
    V. Kiryakova, A. McBride, Explicit solution of the nonhomogeneous hyper-Bessel differential equation.
     \emph{C.R. Acad. Bulg. Sci.} {\bf 46}, No 5 (1993), 23--26.

\bibitem{M10} F. Mainardi, \emph{Fractional Calculus and Waves in Linear Viscoelasticity}. Imperial College Press – World Sci., London - Singapore (2010).

\bibitem{MMP10} F. Mainardi, A. Mura, G. Pagnini,  The $M$-Wright function in time-fractional diffusion processes: A tutorial survey. \emph{Intern. J. of Diff. Equations} \textbf{2010}, Special Issue (2010); DOI: 10.1155/2010/104505.

\bibitem{Pod99} I. Podlubny,
 \emph{Fractional Differential Equations}. Academic Press, San Diego (1999).

\bibitem{Ricci}
P.E. Ricci, Laguerre-type exponentials, Laguerre derivatives and applications. A survey. \emph{Mathematics} {\bf 8} (2020), Art. \# 2054; DOI: 10.3390/math8112054.

\bibitem{Wendel} J.G. Wendel, Note on the gamma function. \emph{Amer. Math. Mon.} \textbf{55}, No 9 (1948), 563--564;  DOI: 10.2307/2304460.

\bibitem{Wright1933} E.M. Wright, On the coefficients of power series having exponential singularities.
\emph{J. London Math. Soc.} \textbf{1}, No 8 (1933), 71--79.

\bibitem{Wright1935a} E.M. Wright, The asymptotic expansion of the generalized Bessel function. \emph{Proc.
London Math. Soc. (Ser. II)} \textbf{2}, No 1 (1935), 257--270.

\bibitem{Wright1935b} E.M. Wright, The asymptotic expansion of the generalized hypergeometric function.
\emph{J. London Math. Soc.} \textbf{1}, No 4 (1935), 287--293.


\end{thebibliography}
\end{document}